\def\pref#1{(\ref{#1})}
\def\p@enumii{}
\newtheorem{thm}{Theorem}[section]
\newtheorem{lem}[thm]{Lemma}
\newtheorem{prop}[thm]{Proposition}
\newtheorem{cor}[thm]{Corollary}
\newtheorem{prob}[thm]{Problem}
\theoremstyle{definition}
\newtheorem{ex}[thm]{Example}
\theoremstyle{remark}
\newtheorem{rem}[thm]{Remark}
\numberwithin{equation}{section}
\newcommand{\z}{\mathbb{Z}}
\newcommand{\n}{\mathbb{N}}
\newcommand{\G}{\mathcal{G}}
\newcommand{\X}{\mathcal{X}}
\newcommand{\A}{\mathcal{A}}
\newcommand{\M}{\mathcal{M}}
\newcommand{\N}{\mathcal{N}}
\renewcommand{\S}{\mathcal{S}}
\newcommand{\U}{\mathrm{U}}
\renewcommand{\b}[1]{\overline{#1}}
\newcommand{\rg}{\rangle}
\renewcommand{\lg}{\langle}
\newcommand{\se}{\subseteq}
\newcommand{\sm}{\setminus }
\newcommand{\ifof}{if and only if }
\renewcommand{\l}{\left}
\renewcommand{\r}{\right}
\renewcommand{\b}{\big}
\newcommand{\chr}{\mathrm{char}\,}
\newcommand{\diag}{\mathrm{diag}}
\newenvironment{psmat}
{\left(\begin{smallmatrix}}
	{\end{smallmatrix}\right)}
\begin{document}
\title{Well-covered Unit Graphs of Finite Rings}

\author{Shahin Rahimi$^1$ and Ashkan Nikseresht$^{2}$\\
\it\small Department of Mathematics, College of Sciences, Shiraz University, \\
\it\small 71457-44776, Shiraz, Iran\\
\it\small $^1$E-mail: shahin.rahimi.math@gmail.com\\
\it\small $^2$E-mail: ashkan\_nikseresht@yahoo.com}
\date{}

\maketitle
\begin{abstract}
Let $R$ be a finite ring with identity. The unit graph (unitary Cayley graph) of $R$ is the graph with vertex set
$R$, where two distinct vertices $x$ and $y$ are adjacent exactly whenever $x+y$ is a unit in $R$ ($x-y$ is a unit in
$R$). Here, we study independent sets of unit graphs of matrix rings over finite fields and use them to characterize
all finite rings for which the unit graph is well-covered or Cohen-Macaulay. Moreover, we show that the unit graph of
$R$ is well-covered if and only if the unitary Cayley graph of $R$ is well-covered and the characteristic of $R/J(R)$
is two.
\end{abstract}

Keywords: unit graph, unitary Cayley graph, well-covered graph, finite ring, Cohen-Macaulay graph. \\
\indent 2020 Mathematical Subject Classification: 05C25, 05C69, 15B33, 16P10, 05E40

\section{Introduction}

Throughout this paper, all rings are assumed to be finite, associative, and with a nonzero identity, and $R$ always denotes a ring. Additionally, all graphs considered here are simple and undirected. We let
$\U(R)$ denote the set of all unit elements in $R$, $J(R)$ the Jacobson radical of $R$, $\mathbb{Z}_n$ the ring of integers modulo $n$, and $I_n$ the identity matrix of size $n$.

Recall that the \emph{unit graph} of $R$, which we denote by $\G'(R)$, is the (undirected) graph with vertex set $R$,
where two distinct vertices $x$ and $y$ are adjacent if and only if $x + y$ is a unit in $R$. For certain rings
(roughly speaking, for $\mathbb{Z}_n$), unit graphs were introduced by Grimaldi in \cite{grimaldi} and later
generalized to arbitrary rings in \cite{unit-ashrafi}. Recently, several researchers have delved into the study of unit
graphs associated with rings; for instance, see
\cite{unit-ashrafi,unit-heydari,unit-maimani,unit-mudaber,unit-pournaki}.

A graph is called \emph{well-covered} if all its maximal independent sets are of the same size. Well-covered
graphs have been extensively studied in graph theory from various perspectives, including algorithmic, algebraic, and structural characterizations (see, for example, \cite{Brown, Finbow, product, WellNew, Our, yassemi, WellSurvey}).

In \cite{unit-pournaki}, well-covered unit graphs of commutative rings are characterized. The main aim of this research
is to extend these results to non-commutative rings by establishing  a connection with unitary Cayley graphs. Recall
that the \emph{unitary Cayley graph} $\G(R)$ of $R$ is the (undirected) Cayley graph of the abelian group $(R,+)$ with
respect to the set $\U(R)$ of unit elements of $R$. In other words, its vertex set is $R$, and two vertices $x$ and $y$
are adjacent if and only if $x-y$ is a unit in $R$. Here, we show that the unit graph of finite ring $R$ is
well-covered if and only if the unitary Cayley graph of $R$ is well-covered and $\chr\b(R/J(R)\b)=2$. Subsequently, by
utilizing the results on well-covered unitary Cayley graphs presented in \cite{Our}, we attain the desired result. A
key step in proving these results is using the Wedderburn-Artin Theorem to relate the structure of $\G'(R)$ for an
arbitrary finite ring $R$ to $\G'(S)$ where $S$ is a product of matrix rings over some finite fields and hence
simplifying the problem.

Given a graph $G$ with vertex set $[n]=\{1,\ldots, n\}$ and edge set $E(G)$, the \emph{edge ideal} of $G$ is the ideal $I(G)=\lg x_ix_j\mid {i,j}\in E(G) \rg$ of $K[x_1,\ldots, x_n]$ where $K$ is a field. A graph
$G$ is called \emph{Cohen-Macaulay} (\emph{CM}, for short) over $K$ when the quotient ring $K[x_1,\ldots, x_n]/I(G)$ is a Cohen-Macaulay ring (see, \cite{CM
ring} or \cite{hibi} for the algebraic background). In the realm of commutative algebra, extensive research has been devoted to the study of edge ideals associated with graphs and CM graphs, see for example \cite{unit-pournaki, tri-free, matchComp, CWalker, GorenCirc, hibi, LineGraphs,
Trung, yassemi} and the references therein. Here, as a corollary to our results on well-covered unit graphs, we present a characterization of rings with a CM unit graph.

\section{The Main Results}
In this section, we aim to characterize finite rings for which the unit graph is well-covered or Cohen-Macaulay.
To that aim, we first recall the concept of semisimple rings.
A \emph{semisimple ring} is an Artinian ring whose Jacobson radical is zero (see \cite[Theorem 15.20]{AndersonFuller}).
Consider a finite ring $R$. Consequently,  $R/J(R)$ is a semisimple ring. The Wedderburn-Artin Theorem states that a semisimple ring  is isomorphic to $M_{n_1}(D_1) \times \cdots \times M_{n_r}(D_r)$ for some division rings $D_i$ and
positive integers $n_i$ (see \cite[Section 13]{AndersonFuller}). Moreover, by Wedderburn's Little Theorem, it is well-known that a finite division ring is a field. Hence, for any finite ring $R$, we have $R/J(R)\simeq M_{n_1}(F_1) \times \cdots \times M_{n_r}(F_r)$ for some finite fields $F_i$ and positive integers $n_i$.

According to \cite[Proposition 3.1]{Our}, for a subset $\A$ of finite ring $R$, $\A$ is a maximal independent set of $\G(R)$ if and only if $\bar{\A}$ is a maximal independent set of $\G\b(R/J(R)\b)$, where $\bar{\A}$ denotes the image of $\A$ in $R/J(R)$, and $\A=\A+J(R)$. This correspondence served as a crucial step in reducing the problem to semisimple rings for unitary Cayley graphs. It is noteworthy that this correspondence does not hold for unit graphs in general. For instance, $\{1,2\}$  forms a maximal independent set of $\G'(\mathbb{Z}_3)$, but its inverse image in $\G'(\mathbb{Z}_9)$, namely $\{1,2,4,5,7,8\}$, is not even an independent set. Nevertheless, this discrepancy does not occur when $2$ is a zero-divisor.

\begin{lem}\label{rtorj}
Suppose $R$ is a finite ring with $2\notin \U(R)$. Then, $\G'(R)$ is a well-covered graph if and only if so is $\G'\b(R/J(R)\b)$.
\end{lem}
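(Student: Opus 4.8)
The plan is to establish, for unit graphs under the hypothesis $2\notin\U(R)$, the exact analogue of \cite[Proposition 3.1]{Our}: that the maximal independent sets of $\G'(R)$ are precisely the full preimages, under the quotient map $R\to R/J(R)$, of the maximal independent sets of $\G'\b(R/J(R)\b)$. Once this correspondence is available, every maximal independent set $\A$ of $\G'(R)$ satisfies $\A=\A+J(R)$ together with $|\A|=|J(R)|\cdot|\bar\A|$, where $\bar\A$ is maximal independent in $\G'\b(R/J(R)\b)$. Consequently the sizes of the maximal independent sets of $\G'(R)$ are exactly $|J(R)|$ times those of $\G'\b(R/J(R)\b)$, so the two graphs are well-covered or not together. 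The whole difficulty is thus concentrated in proving this correspondence, which—as the $\z_9$ example preceding the lemma shows—genuinely requires $2\notin\U(R)$.

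First I would record the two adjacency rules that realise $\G'(R)$ as a blow-up of $\G'\b(R/J(R)\b)$, in which each vertex is replaced by an edgeless fibre of size $|J(R)|$. Since $R$ is finite, an element $a\in R$ is a unit if and only if $\bar a\in\U\b(R/J(R)\b)$ (a non-unit reduces to a non-unit because $1+J(R)\se\U(R)$). Hence for $x,y\in R$ the membership $x+y\in\U(R)$ depends only on $\bar x+\bar y$, so whenever $\bar x\neq\bar y$ the vertices $x,y$ are adjacent in $\G'(R)$ exactly when $\bar x,\bar y$ are adjacent in $\G'\b(R/J(R)\b)$. The decisive point is the behaviour inside a single fibre $x+J(R)$: for distinct $x,x+j$ with $j\in J(R)$ one has $x+(x+j)=2x+j$, whose image is $2\bar x$. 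Because $2=2\cdot 1_R$ is central and is not a unit, $2x$ cannot be a unit for any $x$—otherwise, from $(2x)u=1$ we would get $2(xu)=1$, so $2$ is right invertible and hence a unit since $R$ is finite—whence $2\bar x\notin\U\b(R/J(R)\b)$ and $x\not\sim x+j$. Thus every fibre is an independent set of $\G'(R)$.

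With these two facts, the correspondence follows exactly as in \cite[Proposition 3.1]{Our}. If $\A=\A+J(R)$, then independence of $\A$ in $\G'(R)$ is equivalent to independence of $\bar\A$ in $\G'\b(R/J(R)\b)$, since cross-fibre adjacencies are governed by $R/J(R)$ and there are none within a fibre. For maximality, if a maximal independent $\A$ meets a fibre $x+J(R)$ then it must contain the whole fibre, as each missing fibre element is non-adjacent to everything already chosen (non-adjacent within its own fibre, and its cross-fibre adjacencies coincide with those of the element of $x+J(R)$ already in $\A$); hence $\A=\A+J(R)$, and $\bar\A$ is forced to be maximal in $\G'\b(R/J(R)\b)$, because any vertex enlarging $\bar\A$ would lift to a fibre enlarging $\A$. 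Conversely the preimage of a maximal independent set of $\G'\b(R/J(R)\b)$ is maximal in $\G'(R)$. This yields $|\A|=|J(R)|\cdot|\bar\A|$ and hence the claimed equivalence. I expect the only real obstacle to be the fibre-edgelessness step—equivalently the implication $2\notin\U(R)\Rightarrow 2x\notin\U(R)$ for all $x$—while the saturation and maximality bookkeeping is routine and parallels the unitary Cayley case.
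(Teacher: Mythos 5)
Your proposal is correct and follows essentially the same route as the paper: both reduce the lemma to the correspondence $\A\mapsto\bar\A$ between maximal independent sets, with the decisive point being that $2\notin\U(R)$ forces $2x\notin\U(R)$ for every $x$, so that fibres $x+J(R)$ are edgeless and adjacency is governed entirely by $R/J(R)$. Your explicit justification that $2x\in\U(R)$ would make $2$ right-invertible and hence a unit (by finiteness and centrality of $2$) is a welcome elaboration of a step the paper leaves implicit, but the argument is otherwise the same.
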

\begin{proof}
The proof follows a similar approach to that in \cite[Proposition 3.1]{Our}, with some minor adjustments. Let $\A$ be a subset of $R$. It suffices to show that $\A$ is a maximal independent set of $\G'(R)$ if and only if $\bar{\A}$ is a maximal independent set of $\G'\b(R/J(R)\b)$ and $\A=\A+J(R)$.
First, assume that $\A$ is a maximal independent set of $\G'(R)$. Noting that $u+J(R)\in \U\b(R/J(R)\b)$ iff $u\in \U(R)$, it is straightforward to show that $\bar{\A}$ is a maximal independent set of $\G'\b(R/J(R)\b)$. To establish $\A=\A+J(R)$, consider $a+j \in \A+J(R)$, where $a\in \A$ and $j\in J(R)$. If $a+j$ is adjacent to some $a'\in \A$, then $(a+a')+j$ is a unit of $R$. Consequently, $a+a'$ is a unit of $R$. Since $\A$ is an independent set, this cannot happen unless $a=a'$. Hence, $2a \in \U(R)$ which contradicts the assumption that $2$ is a zero-divisor. Therefore, $\A \cup \{a+j\}$ is an independent set, which, by maximality of $\A$, implies that $a+j \in \A$, and thus $\A=\A+J(R)$.

Now, for the converse, let $a,a'\in \A$. If $a$ is adjacent to $a'$, then $\bar{a}$ is adjacent to $\bar{a'}$, unless $\bar{a}=\bar{a'}$. Since $\bar{\A}$ is an independent set, it follows that $\bar{a}=\bar{a'}$. Hence, $a'=a+j$ for some $j\in J(R)$. However, $a+a'=2a+j\in \U(R)$ implies that $2a\in \U(R)$, leading to a contradiction. Thus, $\A$ is an independent set. For maximality,  assume that $x\in R$ is not adjacent to any element of $\A$. Since $\bar{\A}$ is a maximal independent set, we have $\bar{x}=\bar{a}$ for some $a\in \A$. Hence, $x=a+j$ for some $j\in J(R)$ which implies that $x\in \A+J(R)=\A$, as desired. Thus, the assertion follows.
\end{proof}

\begin{rem}\label{xtoy}
In the proof of Lemma \ref{rtorj}, we indeed showed that if $2\not\in \U(R)$, then $x$ is adjacent to $y$ in $\G'(R)$ if and only if $\bar{x}$ is adjacent to $\bar{y}$ in $\G'\b(R/J(R)\b)$.
\end{rem}

As mentioned, for a finite ring $R$ with $2\in \U(R)$, there is no connection between the maximal independent sets of  $\G'(R)$ and $\G'\b(R/J(R)\b)$ in general. Indeed, we show that $\G'(R)$ is not well-covered whenever $2\in \U(R)$. First, we deal with rings of matrices. Recall that a diagonal matrix whose entries are $+1$ or $-1$ is called a \emph{signature matrix}.

\begin{lem}\label{Sn}
Let $F$ be a finite field with $\chr(F)\ne 2$ and let $n$ be a positive integer. Then,
$\S_n=\{\diag(a_1,\ldots,a_n)\mid a_i=\pm 1\}$ is a maximal independent set of $\G'\b(M_n(F)\b)$.
\end{lem}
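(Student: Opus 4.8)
The plan is to verify the two defining properties of a maximal independent set separately: that $\S_n$ is independent, and that it cannot be enlarged. Throughout, recall that two distinct matrices $X,Y$ are adjacent in $\G'\b(M_n(F)\b)$ precisely when $X+Y$ is invertible.

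For independence, I would take two distinct signature matrices $D=\diag(a_1,\ldots,a_n)$ and $D'=\diag(b_1,\ldots,b_n)$ in $\S_n$. Their sum is the diagonal matrix with entries $a_i+b_i\in\{-2,0,2\}$, and since $D\ne D'$ there is an index $i$ with $a_i=-b_i$, forcing a zero on the diagonal. Thus $D+D'$ is singular, hence not a unit, so no two elements of $\S_n$ are adjacent. This step is routine.

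For maximality, the heart of the argument, I would show that every $M\in M_n(F)\sm\S_n$ is adjacent to some element of $\S_n$; since $M\notin\S_n$, every signature matrix is automatically distinct from $M$, so it suffices to produce a signature matrix $D$ with $M+D$ invertible. The key device is the multilinear polynomial $P(t_1,\ldots,t_n)=\det\b(M+\diag(t_1,\ldots,t_n)\b)$ in indeterminates $t_i$. Expanding the determinant shows that $P$ is linear in each $t_i$ and that the coefficient of the top monomial $t_1\cdots t_n$ equals $1$; in particular $P$ is a nonzero polynomial. It then remains to find an evaluation $t_i=\pm 1$ at which $P$ does not vanish, as this evaluation is exactly the sought signature matrix.

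The main obstacle is therefore the following non-vanishing principle: over a field $F$ with $\chr F\ne 2$, a nonzero multilinear polynomial cannot vanish at every point of $\{\pm 1\}^n$. I would prove this by induction on $n$, writing $P=t_nQ(t_1,\ldots,t_{n-1})+R(t_1,\ldots,t_{n-1})$; specializing $t_n=1$ and $t_n=-1$ and using that $P$ vanishes on the whole cube gives that $Q+R$ and $-Q+R$ vanish on $\{\pm 1\}^{n-1}$, whence so do $2Q$ and $2R$. Since $2$ is invertible in $F$, both $Q$ and $R$ vanish on $\{\pm 1\}^{n-1}$, and the inductive hypothesis forces $Q=R=0$, contradicting $P\ne 0$. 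Applying this to the polynomial $P$ above yields a signature matrix $D$ with $\det(M+D)\ne 0$, completing the proof of maximality. It is worth emphasizing that the hypothesis $\chr F\ne 2$ enters in two essential places: ensuring $+1\ne -1$ so that $\S_n$ consists of the expected $2^n$ distinct elements, and ensuring that $2$ is invertible in the inductive step above.
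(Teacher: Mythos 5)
Your proof is correct, and at bottom it runs on the same engine as the paper's: multilinearity of the determinant in the sign choices, finite differencing to extract a factor of $2$, and invertibility of $2$ to conclude. The packaging, however, is genuinely different. You isolate a clean, reusable key lemma --- a nonzero polynomial of degree at most one in each variable over a field of characteristic $\ne 2$ cannot vanish on all of $\{\pm 1\}^n$ (a special case of the Combinatorial Nullstellensatz, or of the grid form of Schwartz--Zippel) --- and apply it to $P(t_1,\ldots,t_n)=\det\big(M+\diag(t_1,\ldots,t_n)\big)$, whose coefficient of $t_1\cdots t_n$ is $1$. The paper instead argues directly on matrices: assuming $\det(A+S)=0$ for every $S\in\S_n$, it shows by induction on $m$ that replacing any $m$ rows of $A+S$ by the corresponding rows of $2I_n$ still yields a singular matrix, arriving at the contradiction $\det(2I_n)=0$ when $m=n$. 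Your inductive step (subtract the specializations $t_m=\pm 1$ to see that $2Q$ vanishes, then divide by $2$) is precisely the paper's replacement of the row $\mathbf{a}_m+\mathbf{s}_m$ by $2\mathbf{s}_m$ via row linearity, so the two inductions are the same computation in different clothes. What your formulation buys is modularity: the determinant enters only through the single observation that $P$ is multilinear with top coefficient $1$, and the role of $\chr F\ne 2$ is visibly concentrated in one division by $2$; the paper's version stays closer to the matrices and avoids introducing polynomial identities. Both arguments are complete, and your treatment of independence coincides with the paper's.
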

\begin{proof}
Assuming $S, S' \in \S_n$ are distinct elements, we can find a row, say the $i$-th row, in which they differ. However, in this case, $S+S'$ contains a zero row, that is, the $i$-th row. Hence, $\S_n$ is an independent set. For maximality, suppose  $A\in M_n(F)\setminus \S_n$ such that $\S_n\cup \{A\}$ is an independent set.
We know that if $S\in \S_n$, then the matrix $S'$  obtained by negating any row of $S$ is also in $\S_n$. Thus, $\det(A+S')=\det(A+S)=0$, since $A\ne S,S'$.
Letting $1\leq m\leq n$, by induction on $m$, we show that for any $S\in \S_n$,  the matrix formed by selecting any $m$ rows from $2I_n$ and the remaining rows from $A + S$ is not invertible. Denote the rows of $A$ and $S$ by $\textbf{a}_i$ and $\textbf{s}_i$, respectively. Assume that $m=1$ and without loss of generality, let the first row of the aforementioned matrix be the first row of $2I_n$. Note that the first $m$ rows of $I_n$ is the matrix $(I_m\mid 0_{m\times(n-m)})$ and $2\textbf{s}_1=\pm (2I_1\mid 0_{1\times (n-1)})$. Using row
linearity of the determinant, it can be said that:
	\begin{align*}
		\pm\left| \hspace*{-0.75em}\begin{array}{rcl}
2I_{1}\hspace*{-2em} & \mid & \hspace*{-2em} 0_{1\times (n-1)}\\
			&\textbf{a}_2+\textbf{s}_2 &\\
		&	\vdots& \\
			&\textbf{a}_n+\textbf{s}_n&
\end{array} \hspace*{-0.25em} \right|
		&=\begin{vmatrix}
			2\textbf{s}_1 \\
			\textbf{a}_2+\textbf{s}_2\\
			\vdots \\
			\textbf{a}_n+\textbf{s}_n
		\end{vmatrix}
		=\begin{vmatrix}
			\textbf{a}_1+\textbf{s}_1\\
			\textbf{a}_2+\textbf{s}_2 \\
			\vdots \\
			\textbf{a}_n+\textbf{s}_n
		\end{vmatrix}-\begin{vmatrix}
			\textbf{a}_1-\textbf{s}_1\\
			\textbf{a}_2+\textbf{s}_2 \\
			\vdots \\
			\textbf{a}_n+\textbf{s}_n
		\end{vmatrix} \\
		&=\det(A+S)-\det(A+S')=0.
	\end{align*} Thus, the hypothesis holds true for $m=1$. Now, for $m>1$, it can be observed that (here in each term, $0$ denotes the zero  matrix with the appropriate size):
$$\pm\left|  \hspace*{-1em}\begin{array}{rcc}
2I_{m} \hspace*{-3em}&\mid& \hspace*{-3.75em} 0\\
			&\textbf{a}_{m+1}+\textbf{s}_{m+1}& \\
			&\vdots& \\
			&\textbf{a}_n+\textbf{s}_n&
\end{array}\hspace*{-1em} \right|=
\left| \hspace*{-.25em}\begin{array}{ccc}
2I_{m-1}\hspace*{-2.25em} &\mid&\hspace*{-2.25em} 0 \\
			&\textbf{2}s_m& \\
			&\vdots &\\
			&\textbf{a}_n+\textbf{s}_n&
\end{array}\hspace*{-.5em} \right|=
\left|\hspace*{-.25em} \begin{array}{ccc}
2I_{m-1}\hspace*{-2.5em} &\mid&\hspace*{-3em} 0 \\
		&	\textbf{a}_m+\textbf{s}_m &\\
		&	\vdots& \\
		&	\textbf{a}_n+\textbf{s}_n&
\end{array}\hspace*{-1em} \right|-
\left|\hspace*{-.25em} \begin{array}{ccc}
2I_{m-1}\hspace*{-2.5em} &\mid &\hspace*{-3em}0 \\
		&	\textbf{a}_m-\textbf{s}_m &\\
		&	\vdots& \\
		&	\textbf{a}_n+\textbf{s}_n&
\end{array} \hspace*{-1em}\right|=0,$$
by induction hypothesis. Therefore, the hypothesis holds for any $m$. As a result, by setting $m=n$, we obtain $\det(2I_n)=0$, leading to a contradiction since $\chr(F)\ne 2$. Thus, $\S_n$ is a maximal independent set of $\G'\b(M_n(F)\b)$.
\end{proof}

Indeed, in Lemma \ref{Sn}, we constructed a maximal independent set of $\G'\b(M_n(F)\b)$, whose size is a power of two. On the other hand, it is not hard to see that the family of matrices in $M_n(F)$ whose first row is zero forms a maximal independent set of size $|F|^{n^2-n}$, which is an odd number. Consequently, we obtain the following result.

\begin{cor}
For any finite field $F$ and positive integer $n$, if $\chr(F)\ne 2$, then $\G'\b(M_n(F)\b)$ is not well-covered.
\end{cor}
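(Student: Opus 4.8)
The plan is to exhibit two maximal independent sets of $\G'\b(M_n(F)\b)$ of different sizes, which immediately contradicts the defining property of well-covered graphs. By Lemma~\ref{Sn}, the set $\S_n$ of signature matrices is a maximal independent set; since a signature matrix is determined by its $n$ diagonal entries, each of which is $\pm 1$, we have $|\S_n|=2^n$, a power of two and hence even (as $n\geq 1$). So it remains to produce a maximal independent set of \emph{odd} cardinality, which will serve as the second set of a different size.

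For the second set, I would take $\A=\{A\in M_n(F)\mid \text{the first row of } A \text{ is zero}\}$. First I would check that $\A$ is independent: if $A,A'\in\A$ are distinct, then $A+A'$ has zero first row, so $\det(A+A')=0$, meaning $A+A'\notin\U\b(M_n(F)\b)$ and thus $A,A'$ are non-adjacent. Next I would verify maximality: given any $B\notin\A$, the first row $\textbf{b}_1$ of $B$ is nonzero, and I must find some $A\in\A$ adjacent to $B$, i.e.\ with $B+A$ invertible. Since $B+A$ ranges over all matrices whose first row equals $\textbf{b}_1\neq 0$ as $A$ ranges over $\A$, it suffices to complete the nonzero row $\textbf{b}_1$ to an invertible matrix by a suitable choice of the remaining $n-1$ rows (taken as the rows of $A$); this is always possible since a nonzero vector can be extended to a basis of $F^n$. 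Hence $\A$ is a maximal independent set. Finally, its cardinality is $|\A|=|F|^{n^2-n}=|F|^{n(n-1)}$; as $\chr(F)\neq 2$, $|F|$ is odd, so $|\A|$ is odd.

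Having the two sets in hand, I would conclude: $|\S_n|=2^n$ is even while $|\A|=|F|^{n(n-1)}$ is odd, so these two maximal independent sets have different sizes, and therefore $\G'\b(M_n(F)\b)$ is not well-covered. I do not expect any serious obstacle here, as the corollary is essentially a counting consequence of Lemma~\ref{Sn}; the only point requiring a line of justification is the maximality of $\A$, which reduces to the elementary linear-algebra fact that any nonzero vector in $F^n$ extends to a basis.
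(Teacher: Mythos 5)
Your proof is correct and takes exactly the same route as the paper: the authors also compare the maximal independent set $\S_n$ of signature matrices (of even size $2^n$, from Lemma~\ref{Sn}) with the maximal independent set of matrices having zero first row (of odd size $|F|^{n^2-n}$), noting the parity clash. You have merely written out the independence and maximality checks for the second set, which the paper dismisses as ``not hard to see,'' and those checks are carried out correctly.
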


Next, we aim to extend the above corollary to semisimple rings.  Initially, we identify certain maximal independent sets of $\G'(R)$.

\begin{lem}\label{mis-unit}
Let $R=R_1\times R_2\times\cdots\times R_t$, where each $R_i$ is a finite ring and each $\M_i$ is a maximal independent set of $\G'(R_i)$. Then the following statements hold true.

\begin{enumerate}
\item \label{mis-unit1} If $\M_i \subseteq R_i\setminus \U(R_i)$, then $R_1\times  \cdots \times R_{i-1} \times \M_i \times R_{i+1} \times \cdots \times R_t$ is a maximal independent set of $\G'(R)$.

\item \label{mis-unit2} If $2\in \U(R)$ and for each $i$ we have $\M_i \subseteq \U(R_i)$, then $\M_1\times \M_2\times \cdots \times \M_t$ is a maximal independent set of $\G'(R)$.
\end{enumerate}
\end{lem}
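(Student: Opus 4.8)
The plan is to handle the two parts separately, since each concerns a different structural feature of the product ring $R$. The key tool throughout is the fact that adjacency in a product graph factors coordinatewise: a tuple $(u_1,\ldots,u_t)$ is a unit in $R$ exactly when each $u_i$ is a unit in $R_i$, so $x=(x_1,\ldots,x_t)$ and $y=(y_1,\ldots,y_t)$ are adjacent in $\G'(R)$ iff $x_i+y_i\in\U(R_i)$ for \emph{every} $i$. I would isolate this observation at the start, as both parts rest on it.

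For part \pref{mis-unit1}, write $\A=R_1\times\cdots\times R_{i-1}\times\M_i\times R_{i+1}\times\cdots\times R_t$. To see $\A$ is independent, take two distinct elements of $\A$; they agree outside the $i$-th coordinate (no, they need not), so I would argue instead that if $x,y\in\A$ are adjacent then in particular $x_i+y_i\in\U(R_i)$, forcing $x_i,y_i$ to be adjacent or equal in $\G'(R_i)$. Since $\M_i$ is independent this gives $x_i=y_i$; but then $x_i+x_i=2x_i\in\U(R_i)$, contradicting $\M_i\subseteq R_i\sm\U(R_i)$ because $x_i\in\U(R_i)$ would follow. Hence $\A$ is independent. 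For maximality, take any $z=(z_1,\ldots,z_t)\notin\A$; then $z_i\notin\M_i$, and by maximality of $\M_i$ there is some $w_i\in\M_i$ with $w_i$ adjacent to $z_i$ (or $w_i=z_i$, which is impossible since $z_i\notin\M_i$), so $z_i+w_i\in\U(R_i)$. Now I build a neighbour of $z$ inside $\A$ by choosing in each other coordinate $j\ne i$ an element $w_j\in R_j$ with $z_j+w_j\in\U(R_j)$ — possible since $\U(R_j)\ne\tohi$ — and setting $w=(w_1,\ldots,w_t)$. Then $w\in\A$ and $w$ is adjacent to $z$, so $\A$ is maximal.

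For part \pref{mis-unit2}, set $\M=\M_1\times\cdots\times\M_t$. Independence: if $x,y\in\M$ are adjacent, then $x_i+y_i\in\U(R_i)$ for each $i$, so $x_i,y_i$ are adjacent in $\G'(R_i)$ or equal; independence of $\M_i$ forces $x_i=y_i$ for all $i$, whence $x=y$, contradicting distinctness. For maximality, take $z=(z_1,\ldots,z_t)\notin\M$; some coordinate has $z_i\notin\M_i$. Here the hypothesis $2\in\U(R)$, hence $2\in\U(R_i)$ for every $i$, does the work: I claim that for each coordinate I can pick $w_i\in\M_i$ with $z_i$ adjacent to $w_i$ in $\G'(R_i)$. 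Indeed, if $z_i\in\M_i$ then $2z_i\in\U(R_i)$ shows $z_i$ is self-adjacent in the sense that $z_i+z_i$ is a unit, so taking $w_i=z_i$ works; if $z_i\notin\M_i$, maximality of $\M_i$ supplies such a $w_i$. Assembling $w=(w_1,\ldots,w_t)\in\M$ gives a neighbour of $z$, proving $\M$ maximal.

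The main obstacle I anticipate is the maximality argument in part \pref{mis-unit2}, specifically the coordinates where $z_i\in\M_i$: ordinary maximality of $\M_i$ gives nothing there, and one must instead exploit that $2\in\U(R_i)$ makes every element of an independent-set coordinate adjacent to itself, so that $z_i$ can serve as its own witness $w_i$. This is exactly where the $2\in\U(R)$ hypothesis is indispensable, and getting the self-adjacency bookkeeping right — distinguishing it cleanly from the genuine adjacency used when $z_i\notin\M_i$ — is the delicate point. The independence direction in both parts, and the maximality in part \pref{mis-unit1}, are by comparison routine consequences of the coordinatewise adjacency criterion together with the nonemptiness of each $\U(R_j)$.
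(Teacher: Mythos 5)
Your proof is correct and follows essentially the same route as the paper's: the coordinatewise unit criterion, the observation that $2m\notin\U(R_i)$ when $m\notin\U(R_i)$ for the independence in part (i), the ``complete the other coordinates to a unit'' trick for maximality, and the use of $2\in\U(R_i)$ together with $\M_i\subseteq\U(R_i)$ to let a coordinate already in $\M_i$ serve as its own witness in part (ii). Your explicit per-coordinate case split in the maximality step of part (ii) is, if anything, slightly cleaner than the paper's version, which argues somewhat loosely that one ``may assume'' every coordinate of $x$ lies outside the corresponding $\M_i$.
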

\begin{proof}
\pref{mis-unit1} For the sake of simplicity, we prove that if $R$ and $S$ are finite rings and $\M\subseteq R\setminus \U(R)$ is a maximal independent set of $\G'(R)$, then $\M\times S$ is a maximal independent set of $ \G'(R\times S)$. To this end, consider $(m,s), (m',s')\in \M\times S$. If $m=m'$, then $(m,s)+(m',s')=(2m,s+s')$ is not a unit of $R\times S$ since $m\in \M \subseteq R\setminus \U(R)$. So, assume that $m\ne m'$. As $\M$ is an independent set, $m+m'$ is not a unit of $R$. Hence, $(m,s)+(m',s')=(m+m',s+s')$ is not a unit of $R\times S$. Therefore, $\M\times S$ is an independent set. For maximality, let $(\M\times S)\cup \{(r,s)\}$ be an independent set, where $r\in R\setminus \M$. It can be observed that for any $m\in \M$, $(r,s)+(m,1-s)=(r+m,1)$ is not a unit of $R\times S$. Consequently, for any $m\in \M$, $r+m$ is not a unit. Since $\M$ is a maximal independent set, it follows that  $r\in \M$, a contradiction from which the result follows.

\pref{mis-unit2} Let $\M=\M_1\times \M_2\times \cdots \times \M_t$. It is not difficult to see that $\M$ forms an independent set of $\G'(R)$. To establish its maximality, suppose $x=(x_1,x_2,\ldots,x_t)\in R\setminus \M$ such that $\M\cup \{x\}$ is an independent set of $\G'(R)$ and $x_1\notin \M_1$. Consequently, there exists $m_1\in \M_1$ such that $x_1+m_1$ is a unit in $R_1$. If all the other $x_i$'s are in $\M_i$, then as $x\in \U(R)$ and $\M_i\se \U(R_i)$, we infer that $x$ is adjacent to $(m_1,x_2,\ldots,x_t)$, which is a contradiction. Therefore, we may assume that $x_2\notin \M_2$. By a similar argument, we obtain that for each $i$ we have $x_i\notin \M_i$, but there is some $m_i$ such that $x_i+m_i$ is unit. Consequently, $x$ is adjacent to $(m_1,\ldots,m_t)\in \M$, resulting in a contradiction. Thus, the assertion follows.
\end{proof}

Noting that $r\in R$ is a unit iff it is not contained in any maximal ideal of $R$, it follows that $r\in \U(R)$ iff $\bar{r}\in \U\b(R/J(R)\b)$. We use this in the following without any further mention.

\begin{lem}\label{rj to r}
Suppose that $R$ is a finite ring and $\bar{\M}$ is a maximal independent set of $\G'\b(R/J(R)\b)$.
\begin{enumerate}
\item \label{rj to r1}
 If $\bar{\M} \subseteq \b(R/J(R)\b)\setminus \U\b(R/J(R)\b)$, then $\M+J(R)$ is a maximal independent set of $\G'(R)$ with size $|\bar{\M}||J(R)|$.

\item \label{rj to r2} If $\bar{\M}\subseteq \U\b(R/J(R)\b)$ and $2\in \U(R)$, then by choosing $\M$ to be a complete set of representatives of $\bar{\M}$, $\M$ forms a maximal independent set of $\G'(R)$ with $|\M|=|\bar{\M}|$.
\end{enumerate}
\end{lem}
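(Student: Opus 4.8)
The plan is to prove both parts by transporting adjacency in $\G'(R)$ to adjacency in $\G'\b(R/J(R)\b)$ along the canonical projection $r\mapsto\bar r$. The one tool I need is the fact recorded just before the lemma, that $r\in\U(R)$ iff $\bar r\in\U\b(R/J(R)\b)$. From this, for any \emph{distinct} $x,y\in R$, the adjacency condition $x+y\in\U(R)$ is equivalent to $\bar x+\bar y\in\U\b(R/J(R)\b)$; so the projection detects adjacency faithfully \emph{except} on pairs $x\neq y$ lying in a common fiber, say $\bar x=\bar y=\bar a$, for which $x+y$ maps to $2\bar a$. Thus the whole argument reduces to understanding when $2\bar a$ is a unit, and this same-fiber analysis is the only place where the clean correspondence with $\G'\b(R/J(R)\b)$ can fail.

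This is the main obstacle, and I would isolate it first as a short dichotomy. Since $2=1+1$ is central, if $2\bar a\in\U\b(R/J(R)\b)$ then $\bar a$ is itself a unit, with inverse $2(2\bar a)^{-1}$; contrapositively, $\bar a\notin\U\b(R/J(R)\b)$ forces $2\bar a\notin\U\b(R/J(R)\b)$. Conversely, if $2\in\U(R)$ and $\bar a\in\U\b(R/J(R)\b)$, then $2\bar a$ is a product of units and hence a unit. Therefore the hypothesis of \pref{rj to r1} makes same-fiber pairs \emph{never} adjacent, while the hypothesis of \pref{rj to r2} makes them \emph{always} adjacent --- the two opposite regimes that the lemma separates, and the precise reason each hypothesis is indispensable.

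For \pref{rj to r1}, the set $\M+J(R)$ is exactly the preimage of $\bar{\M}$, so its size is $|\bar{\M}||J(R)|$ at once. For independence I would take distinct $x,y\in\M+J(R)$: if $\bar x\neq\bar y$ they are distinct elements of the independent set $\bar{\M}$, so $\bar x+\bar y$ is a non-unit and $x,y$ are non-adjacent; if $\bar x=\bar y=\bar a$, then $\bar a$ is a non-unit, hence $2\bar a$ is a non-unit and again $x,y$ are non-adjacent. For maximality, any $x\notin\M+J(R)$ satisfies $\bar x\notin\bar{\M}$, so maximality of $\bar{\M}$ yields some $\bar a\in\bar{\M}$ adjacent to $\bar x$; lifting $\bar a$ to a preimage in $\M+J(R)$ gives an element adjacent to $x$.

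For \pref{rj to r2}, choosing $\M$ to be a transversal of $\bar{\M}$ gives $|\M|=|\bar{\M}|$, and distinct elements of $\M$ lie in distinct fibers, hence map to distinct members of $\bar{\M}$ and are non-adjacent; so no same-fiber pairs occur and $\M$ is independent. For maximality I would take $x\notin\M$ and split on $\bar x$: if $\bar x\notin\bar{\M}$, I proceed exactly as in \pref{rj to r1} using maximality of $\bar{\M}$; if $\bar x=\bar a\in\bar{\M}$ with representative $a\in\M$, then $x\neq a$ share a fiber over the unit $\bar a$, so $2\bar a$ is a unit and $x$ is adjacent to $a\in\M$. In either case $\M\cup\{x\}$ is dependent, establishing maximality and completing the plan.
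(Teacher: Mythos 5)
Your proof is correct and follows essentially the same route as the paper: reduce adjacency in $\G'(R)$ to adjacency in $\G'\b(R/J(R)\b)$ via the projection, and handle the same-fiber pairs through the unit status of $2\bar a$. Your up-front dichotomy (that $2\bar a$ is a unit iff $\bar a$ is, given $2\in\U(R)$, using centrality of $2$) in fact makes explicit a small step the paper leaves implicit in part (i), but the argument is otherwise the same.
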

\begin{proof}
\pref{rj to r1} Assume that $m_1+j_1$ is adjacent to $m_2+j_2$ in $\G'(R)$ for some $m_1,m_2\in \M$ and $j_1,j_2\in J(R)$. Consequently, $(m_1+j_1)+(m_2+j_2)=(m_1+m_2)+(j_1+j_2)$ is a unit in $R$. By taking images from $R$ to $R/J(R)$, it follows that $\bar{m}_1+\bar{m}_2$ is a unit in $R/J(R)$. This contradicts the independence of  $\bar{\M}$ unless $\bar{m}_1=\bar{m}_2$. Since $\bar{\M}$ consists only of non-unit elements, thus $\bar{m}_1+\bar{m}_2=2\bar{m}_1$ is not a unit element of $R/J(R)$, a contradiction. Therefore, $\M+J(R)$ is an independent set of $\G'(R)$. To prove that it is maximal, let $r \in R$. If $\bar{r} \in \bar{\M}$, then $r-m\in J(R)$ for some $m\in \M$. Hence $r\in \M+J(R)$. So, assume that $\bar{r}\notin \bar{\M}$. Since $\bar{\M}$ is maximal, there exists $m\in \M$ such that $\bar{r}$ is adjacent to $\bar{m}$. Thus, $\bar{r}+\bar{m}$ is a unit in $R/J(R)$. Consequently, $r+m$ is a unit in $R$, implying that $r$ is adjacent to $m\in \M+J(R)$, as desired.

\pref{rj to r2} For $\bar{\M}=\{\bar{m}_1,\ldots,\bar{m}_t\}$, we choose $\M$ to be $\{m_1,\ldots,m_t\}$ such that  $\bar{m}_i\ne\bar{m}_j$ for each $i\ne j$. Under this assumption, if $m_i$ is adjacent to $m_j$ for $i\ne j$, then $\bar{m}_i$ is adjacent to $\bar{m}_j$, which is absurd. Hence, $\M$ is an independent set of $\G'(R)$. To prove its maximality, let $\M\cup \{r\}$ be an independent set, where $r\in R\setminus \M$. As a result, for all $m\in M$, we have $m+r\not\in \U(R)$, implying $\bar{m}+\bar{r}\not\in \U\b(R/J(R)\b)$. Since $\bar{\M}$ is a maximal independent set, we infer that $\bar{r}=\bar{m}$ for some $m\in \M$. Consequently, $2\bar{m}$ is not a unit element of $R/J(R)$, leading to a contradiction, since $\bar{\M}\subseteq \U\b(R/J(R)\b)$ and $2\in \U(R)$. Hence, the result follows.
\end{proof}

Now, we can show that if the unit graph of $R$ is well-covered, then $2$ must be a zero-divisor in $R$.

\begin{prop}\label{2unit not}
If $R$ is a finite ring and $2\in \U(R)$, then $\G'(R)$ is not well-covered.
\end{prop}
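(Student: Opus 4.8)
The plan is to exhibit two maximal independent sets of $\G'(R)$ whose sizes have opposite parity, so that they cannot be equal and $\G'(R)$ fails to be well-covered. First I would extract the two structural consequences of the hypothesis $2\in\U(R)$. Since $2$ is central and invertible, the doubling map $x\mapsto 2x$ is an automorphism of the additive group $(R,+)$; a finite abelian group admits such an automorphism only when it has odd order, so $|R|$ is odd, and therefore $|J(R)|$ and $|R/J(R)|$ are odd as well. Writing $R/J(R)\simeq M_{n_1}(F_1)\times\cdots\times M_{n_r}(F_r)$ via Wedderburn--Artin, the oddness of $|R/J(R)|=\prod_i|F_i|^{n_i^2}$ forces every $|F_i|$ to be odd, i.e. $\chr(F_i)\ne 2$, so that Lemma~\ref{Sn} is available in each factor.

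For the set of even size I would work with signature matrices. In each factor $\S_{n_i}\se\U\b(M_{n_i}(F_i)\b)$ is a maximal independent set of units by Lemma~\ref{Sn}, and since $2\in\U\b(R/J(R)\b)$, Lemma~\ref{mis-unit}\pref{mis-unit2} shows that $\bar{\M}=\S_{n_1}\times\cdots\times\S_{n_r}$ is a maximal independent set of $\G'\b(R/J(R)\b)$ consisting entirely of units, of size $2^{n_1+\cdots+n_r}$. Lifting this through Lemma~\ref{rj to r}\pref{rj to r2} (which uses $2\in\U(R)$) produces a maximal independent set of $\G'(R)$ of the same size $2^{\sum_i n_i}$, which is even because $\sum_i n_i\ge 1$.

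For the set of odd size I would instead force a single coordinate into non-invertible matrices. Take $\N_1\se M_{n_1}(F_1)\sm\U\b(M_{n_1}(F_1)\b)$ to be the family of matrices with zero first row, a maximal independent set of non-units of size $|F_1|^{n_1^2-n_1}$ (as observed just after Lemma~\ref{Sn}). The crucial point is that in a product ring a tuple is a unit exactly when every coordinate is, so every element of $\bar{\N}=\N_1\times M_{n_2}(F_2)\times\cdots\times M_{n_r}(F_r)$ is a non-unit of $R/J(R)$. By Lemma~\ref{mis-unit}\pref{mis-unit1}, $\bar{\N}$ is a maximal independent set of $\G'\b(R/J(R)\b)$ lying entirely in the non-units, so Lemma~\ref{rj to r}\pref{rj to r1} lifts it to a maximal independent set of $\G'(R)$ of size $|\bar{\N}|\,|J(R)|=|F_1|^{n_1^2-n_1}\,\big(\prod_{i\ge 2}|F_i|^{n_i^2}\big)\,|J(R)|$, a product of odd numbers and hence odd.

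With one maximal independent set of even size and another of odd size, their sizes differ and I conclude that $\G'(R)$ is not well-covered. The step I expect to be the main obstacle is the lift through Lemma~\ref{rj to r}: that lemma only transfers maximal independent sets of $\G'\b(R/J(R)\b)$ that are homogeneous, consisting either wholly of units or wholly of non-units, so the real work is to construct the odd-sized family out of non-units only. Pinning the first coordinate to the first-row-zero set is precisely the device that keeps every tuple a non-unit, after which the parity bookkeeping (even power of two versus an odd product) closes the argument.
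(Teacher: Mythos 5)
Your proof is correct and follows essentially the same route as the paper: the same two maximal independent sets (signature matrices via Lemma~\ref{Sn} and Lemma~\ref{mis-unit}\pref{mis-unit2}, and the zero-first-row family via Lemma~\ref{mis-unit}\pref{mis-unit1}), lifted through Lemma~\ref{rj to r}, with the same parity comparison. Your explicit preliminary observation that $2\in\U(R)$ forces $|R|$ odd (hence $\chr(F_i)\ne 2$ for every $i$ and $|J(R)|$ odd) is a small but welcome clarification of a point the paper leaves implicit.
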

\begin{proof}
Given that $R$ is a finite ring, the quotient ring $R/J(R)$ is semisimple and isomorphic to $M_{n_1}(F_1)\times \cdots \times M_{n_r}(F_r)$ for some finite fields $F_i$ and positive integers $n_i$. For each $i$, let $\S_i$ be the set of signature matrices in $M_{n_i}(F_i)$. By Lemma \ref{Sn}, $\S_i$ is a maximal independent set of $\G'\b(M_{n_i}(F_i)\b)$ with $|\S_i|=2^{n_i}$. By virtue of Lemma \ref{mis-unit}, $\S=\S_1\times\cdots\times\S_r$ forms a maximal independent set of $\G'\b(R/J(R)\b)$ contained in $\U\b(R/J(R)\b)$ with size $2^{n_1}\times\cdots\times2^{n_r}$. On the other hand, the family $\M_1$ of matrices in $M_{n_1}(F_1)$, whose first row is zero, is a maximal independent set of $\G'\b(M_{n_1}(F_1)\b)$. Consequently, by Lemma \ref{mis-unit}, $\M=\M_1\times M_{n_2}(F_2)\times \cdots \times M_{n_r}(F_r)$ is a maximal independent set of $\G'\b(R/J(R)\b)$ contained in $\b(R/J(R)\b)\setminus \U\b(R/J(R)\b)$. Since $2\in \U(R)$, it follows that $2\in \U\b(R/J(R)\b)$. Hence, $2\nmid |\M_1|$ and for all $i$ we have $2\nmid |M_{n_i}(F_i)|$. Consequently, $2\nmid |\M|$, which implies that $|\M||J(R)|\ne |\S|$. Now,  by applying Lemma \ref{rj to r},
we establish the existence of two maximal independent sets of $\G'(R)$ with different sizes. This implies that $\G'(R)$ is not well-covered.
\end{proof}

Next, we consider the well-coveredness of $\G'(R)$ for rings $R$ in which $2$ is a zero-divisor. First, we need a lemma.

\begin{lem}\label{r times s}
If $R$ and $S$ are finite semisimple rings with $\chr(R)=2\ne \chr(S)$, then $\G'(R\times S)$ is not well-covered.
\end{lem}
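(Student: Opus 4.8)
The plan is to exhibit two maximal independent sets of $\G'(R\times S)$ of different sizes, both produced by part~\pref{mis-unit1} of Lemma~\ref{mis-unit}, one localized at a characteristic-two matrix factor coming from $R$ and the other at an odd-characteristic matrix factor coming from $S$. First I would apply the Wedderburn--Artin decomposition to the semisimple ring $R\times S$, writing it as a product of matrix rings over finite fields. Since $R$ is nonzero and $\chr(R)=2$, the least common multiple of the characteristics of its fields equals $2$, so every matrix factor of $R$ sits over a field of characteristic two; fix one such factor $M_k(F)$. Since $\chr(S)\ne 2$, at least one matrix factor $M_m(E)$ of $S$ sits over a field $E$ of odd characteristic. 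Note also that $2=0$ in the factor $M_k(F)$, so $2\notin\U(R\times S)$; this is exactly what will rule out the naive approach discussed below.

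Next I would recall that for any finite field $K$ and any $n$, the family of matrices in $M_n(K)$ whose first row is zero is a maximal independent set of $\G'(M_n(K))$ consisting entirely of non-units (a matrix with a zero row is singular, the sum of two such matrices again has a zero first row, and maximality follows by completing a nonzero first row to a basis), and it has size $|K|^{n^2-n}$; crucially, none of this uses the characteristic of $K$. Feeding this set into part~\pref{mis-unit1} of Lemma~\ref{mis-unit}, placed in a single coordinate of the decomposition of $R\times S$ while taking the full ring in every other coordinate, yields a maximal independent set of $\G'(R\times S)$. A direct count shows that the set built at the factor $M_n(K)$ has size $|R\times S|/|K|^{n}$, since replacing the full factor $M_n(K)$ (of cardinality $|K|^{n^2}$) by its first-row-zero subset (of cardinality $|K|^{n^2-n}$) divides the total cardinality by $|K|^{n}$.

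Carrying this out at the two chosen factors produces maximal independent sets of $\G'(R\times S)$ of sizes $|R\times S|/|F|^{k}$ and $|R\times S|/|E|^{m}$. Here $|F|^{k}$ is a power of two, hence even, while $|E|^{m}$ is a power of an odd prime, hence odd; in particular $|F|^{k}\ne |E|^{m}$, so the two sizes differ. Consequently $\G'(R\times S)$ has two maximal independent sets of different cardinalities and is not well-covered.

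The step I expect to require the most care is the choice of the two constructions rather than any computation. The tempting route is to contrast an even-sized set of signature matrices (as in Lemma~\ref{Sn}) at the odd factor with an odd-sized first-row-zero set, but the signature-matrix construction combines units across coordinates through part~\pref{mis-unit2} of Lemma~\ref{mis-unit}, which demands $2\in\U(R\times S)$ --- and this fails precisely because $\chr(R)=2$. The resolution is to use part~\pref{mis-unit1} for \emph{both} sets; the hypotheses on the two characteristics then force the denominators $|F|^{k}$ and $|E|^{m}$ to have opposite parity, which is exactly what guarantees that the two maximal independent sets cannot have equal size.
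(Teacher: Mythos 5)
Your proof is correct, and it takes a genuinely different and noticeably shorter route than the paper's. The paper also begins with the maximal independent set $\M\times S$ coming from the first-row-zero construction in the factor $M_{n_1}(F_1)$ of $R$ via Lemma \ref{mis-unit}\pref{mis-unit1}; but for the second set it takes $\N=(R\times\{0\})\cup(\M\times\X)$, where $\X$ is the set of non-units of $S$. Establishing maximality of $\N$ requires an auxiliary claim (every nonzero non-unit $y$ of a finite semisimple ring admits a non-unit $z$ with $y+z$ a unit, proved by Gaussian elimination), and equating $|\N|$ with $|\M||S|$ leads to the identity $|\U(S)|=|R|/|\M|-1$, whose two sides have opposite parity. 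You instead apply Lemma \ref{mis-unit}\pref{mis-unit1} a second time, to the first-row-zero set in an odd-characteristic factor $M_m(E)$ of $S$, and compare the sizes $|R\times S|/|F|^{k}$ and $|R\times S|/|E|^{m}$ directly, one denominator being even and the other odd. This bypasses the auxiliary claim and the parity computation on $|\U(S)|$ altogether, and in fact proves slightly more: any finite semisimple ring with two matrix factors $M_k(F)$ and $M_m(E)$ satisfying $|F|^k\ne|E|^m$ has a non-well-covered unit graph. The only ingredient you use beyond Lemma \ref{mis-unit} is that the first-row-zero family is a maximal independent set of non-units in every characteristic, which is exactly what the paper asserts (by an argument that nowhere uses the characteristic) in the paragraph following Lemma \ref{Sn}. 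Your closing diagnosis of why the signature-matrix set cannot be used here --- part \pref{mis-unit2} needs $2\in\U(R\times S)$, which fails since $\chr(R)=2$ --- correctly identifies the obstruction that the paper's proof is also working around.
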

\begin{proof}
Assume that $R\simeq M_{n_1}(F_1)\times\cdots\times M_{n_r}(F_r)$ and $S\simeq M_{m_1}(E_1)\times \cdots \times M_{m_s}(E_s)$ for some finite fields $F_i$ and $E_i$ with $\chr(F_i)=2\ne \chr(E_i)$ and positive integers $n_i$ and $m_i$. Let $\M_1$ be the maximal independent set of $\G'\b(M_{n_1}(F_1)\b)$ as in Proposition \ref{2unit not}. With the aid of Lemma \ref{mis-unit}, we see that $\M=\M_1\times M_{n_2}(F_2)\times \cdots \times M_{n_r}(F_r)$ is a maximal independent set of $\G'(R)$. Consequently, $\M\times S$ is a maximal independent set of $\G'(R\times S)$ with size $|\M||S|$.

Now, we construct another maximal independent set with a different size. In order to do this, let $\X$ be the set of all non-unit elements of $S$, and set $\N=(R\times \{0\})\cup (\M\times \X)$. Since unit elements of
$R\times S$ are exactly of the form $(a,b)$, such that $a\in \U(R)$ and $b\in \U(S)$, it is not hard to see that
$\N$ is an independent set. If $\N\cup \{(x,y)\}$ is an independent set for some $(x,y)\in \b(R\times S\b)\sm \N$,
then $(x,y)+(1-x,0)=(1,y)$ implies that $y$ is not a unit element of $S$ and hence $x\notin \M$. Now, we prove the following claim.

\textbf{Claim:}
If $y$ is a nonzero non-unit element of the finite semisimple ring $S$, then there exists some non-unit $z\in S$ such that $y+z$ is a unit in $S$.

\textbf{Proof of the claim:}
Since $S\simeq M_{m_1}(E_1)\times \cdots \times M_{m_s}(E_s)$, let $y=(Y_1,Y_2,\ldots,Y_s)$, where $Y_i \in M_{m_i}(E_i)$. If $Y_i=0$, set $Z_i=I_{m_i}$ and if $Y_i$ is invertible, set $Z_i=0$. Else, $Y_i$ is a nonzero non-unit element of $M_{m_i}(E_i)$. Using Gaussian elimination method, we can find invertible matrices $U$ and $V$ such that $Y_i=U	\begin{psmat}
	I_t & 0 \\
	0 & 0
\end{psmat}V$ where $1 \leq t <m_i$.
Thus $ 1 \leq m_i-t< m_i$.
Set $Z_i=U\begin{psmat}
	0 & 0 \\
	0 & I_{m_i-t}
\end{psmat} V$. Clearly, $Z_i$ is not invertible. Moreover, $Y_i+Z_i=UI_{m_i}V=UV$, which is invertible.
Therefore, in all three cases, $Y_i+Z_i$ is invertible. Now, since $y$ is not zero, we can find some nonzero $Y_i$, for which $Z_i$ is not invertible. Hence, $z=(Z_1,Z_2,\ldots,Z_s)$ is a non-unit element of $S$ such that $y+z$ is a unit.

Now, $\M$ being a maximal independent set of $\G'(R)$ and $x\notin \M$ implies that there exists some $m\in \M$ such that $x+m\in \U(R)$. But then $(m,z)\in \N$ and
$(x,y)+(m,z)=(x+m,y+z)$ is a unit element of $R\times S$, that is, $(x,y)$ is adjacent to $(m,z)\in \M\times \X \subseteq \N$,
contradicting the independence  of $\N\cup\{(x,y)\}$. Therefore, $\N$ is a maximal independent set.

If $\G'(R\times S\b)$ is
well-covered, then the maximal independent sets $\M\times S$ and $\N$ must have the same size, implying $|S|=|\N|/|\M|$. Since $(R\times \{0\}) \cap (\M\times \X)=\M\times \{0\}$, it can be seen that: $$|S|=|\N|/|\M|=\b(|R|+|\M||\X|-|\M|\b)/|\M|=|R|/|\M|+|\X|-1.$$
From $|S|=|\X|+|\U(S)|$, we obtain $|\U(S)|=|R|/|\M|-1$. Now, we show that the two sides of the last equality have different parity.
Since all the $F_i$'s are of characteristic $2$, $|R|$ is a power of two, making $|R|/|\M|-1$ an odd integer. On the other hand, for each $i$, we have $\chr(E_i)\ne 2$, hence
$$|\U\b(M_{m_i}(E_i)\b)|=\b(|E_i|^{m_i}-1\b)\b(|E_i|^{m_i}-|E_i|\b)\cdots \b(|E_i|^{m_i}-|E_i|^{m_i-1}\b)$$
is an even integer. Consequently, $|\U(S)|=\b|\U\b(M_{m_1}(E_1)\b)\b|\cdots \b|\U\b(M_{m_s}(E_s)\b)\b|$ is an even integer, which is a contradiction. Thus, the result follows.
\end{proof}

Next, we present a necessary and sufficient condition under which unit graphs and unitary Cayley graphs coincide.

\begin{prop}\label{g=g'}
If $R$ is a finite ring, then $\G(R)=\G'(R)$ if and only if $\chr\b(R/J(R)\b)=2$.
\end{prop}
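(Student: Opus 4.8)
The plan is to translate the hypothesis $\chr\b(R/J(R)\b)=2$ into the ring-theoretic statement $2\in J(R)$, and then exploit the elementary identity $x+y=(x-y)+2y$, which measures precisely how the two adjacency relations differ. First I would record the equivalence $\chr\b(R/J(R)\b)=2 \iff 2\in J(R)$: since $R/J(R)\simeq M_{n_1}(F_1)\times\cdots\times M_{n_r}(F_r)$ and $1\notin J(R)$, the characteristic of $R/J(R)$ equals $2$ exactly when $\bar 2=0$, that is, when $2\in J(R)$.

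For the forward implication I assume $2\in J(R)$. As $J(R)$ is an ideal, $2y\in J(R)$ for every $y\in R$, and adding an element of $J(R)$ to any ring element preserves being a unit: if $a\notin\U(R)$ while $a+j\in\U(R)$ for some $j\in J(R)$, then $a=(a+j)-j\in\U(R)+J(R)\se\U(R)$, a contradiction. Applying this fact in both directions with $a=x-y$ and $j=2y$ shows that $x-y\in\U(R)$ if and only if $x+y=(x-y)+2y\in\U(R)$, for all $x,y\in R$. Hence $\G(R)$ and $\G'(R)$ have the same edge set, so $\G(R)=\G'(R)$.

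For the converse I would argue contrapositively: assuming $2\notin J(R)$, I produce a single edge present in one graph but not the other. The natural witness is the pair $x=y+1$ and $y$, which are distinct and satisfy $x-y=1\in\U(R)$, so $\{x,y\}$ is always an edge of $\G(R)$, while $x+y=1+2y$. Thus it suffices to find $y$ with $1+2y\notin\U(R)$. Using the already-noted fact that $r\in\U(R)$ iff $\bar r\in\U\b(R/J(R)\b)$, I reduce this to the semisimple quotient: writing $R/J(R)\simeq M_{n_1}(F_1)\times\cdots\times M_{n_r}(F_r)$ and choosing an index $i$ with $\chr(F_i)\ne 2$ (which exists since $2\notin J(R)$), the scalar $2$ is invertible in $F_i$, so I set $\bar y$ equal to $-\bar 2^{-1}I_{n_i}$ in the $i$-th coordinate and $0$ elsewhere. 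Then $1+2\bar y$ has zero $i$-th coordinate, hence is a non-unit of $R/J(R)$, and any lift $y$ satisfies $1+2y\notin\U(R)$. This produces an edge of $\G(R)$ that is not an edge of $\G'(R)$, so $\G(R)\ne\G'(R)$.

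The conceptual core, and the only subtle point, is the identity $x+y=(x-y)+2y$ combined with the fact that translation by an element of $J(R)$ cannot change unit-ness; everything else is bookkeeping. The main (minor) obstacle is the converse, where one must exhibit a concrete non-unit of the shape $1+2y$: rather than arguing abstractly in $R$, the cleanest route is to pass to the Wedderburn decomposition of $R/J(R)$ and invert $2$ inside a matrix block of odd characteristic, after which the preservation of unit-ness under the quotient map transfers the example back to $R$.
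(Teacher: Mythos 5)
Your proof is correct, and it takes a noticeably more self-contained route than the paper's. For the direction $\chr\b(R/J(R)\b)=2\Rightarrow\G(R)=\G'(R)$, the paper leans on its earlier reduction machinery (Remark \ref{xtoy} together with \cite[Proposition 3.1]{Our}) to transfer both adjacency relations down to $R/J(R)$, where $x+y=x-y$; you instead observe $x+y=(x-y)+2y$ with $2y\in J(R)$ and invoke the standard fact $\U(R)+J(R)\se\U(R)$, which settles the equivalence of the two adjacency conditions in two lines without any quotient-graph correspondence. For the other direction the two arguments are closer in spirit: the paper deduces from $\G(R)=\G'(R)$ that every odd integer is a unit (via the chain $2k-1\in\U(R)\iff 2k+1\in\U(R)$) and concludes via the Wedderburn decomposition, while you argue contrapositively and exhibit a concrete edge $\{y+1,y\}$ of $\G(R)$ missing from $\G'(R)$ by engineering $1+2y$ to vanish in a matrix block of odd characteristic. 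Both converse arguments ultimately hinge on the same point (elements of the form $1+2y$ detect the odd-characteristic blocks), but your explicit witness avoids the small distinctness bookkeeping ($2k\ne 1$) implicit in the paper's induction on odd integers, and your identity-based forward direction is arguably the cleaner mechanism since it makes transparent exactly where $2\in J(R)$ is used.
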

\begin{proof}
Using Remark \ref{xtoy} and \cite[Proposition 3.1]{Our}, the proof of the converse is routine. For the implication,
suppose $\G(R)=\G'(R)$. Thus, for distinct elements $x,y\in R$ we have $x-y\in \U(R)$ if and only if $x+y\in \U(R)$.
Consequently, for any positive integer $k$ we have $2k-1\in \U(R)$ if and only if $2k+1 \in \U(R)$. As $1$ is a unit,
it follows that every odd integer is a unit element of $R$, implying that $\chr\b(R/J(R)\b)=2$.
\end{proof}

Finally, we determine when the unit graph $\G'(R)$ is well-covered.

\begin{thm}\label{wellunit}
Let $R$ be a finite ring. Then, the unit graph of $R$ is well-covered if and only if the unitary Cayley graph of $R$ is  well-covered and $\chr\b(R/J(R)\b)=2$.
\end{thm}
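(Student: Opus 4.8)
The plan is to organize everything around a single dichotomy on the characteristic of $R/J(R)$, using Proposition \ref{g=g'} to dispose of one case outright. When $\chr\b(R/J(R)\b)=2$, Proposition \ref{g=g'} identifies the two graphs, $\G(R)=\G'(R)$; consequently the three properties ``$\G'(R)$ is well-covered'', ``$\G(R)$ is well-covered'', and ``$\G(R)$ is well-covered and $\chr\b(R/J(R)\b)=2$'' all coincide, and the asserted equivalence holds in this case for free. The entire theorem therefore reduces to a single claim: when $\chr\b(R/J(R)\b)\ne 2$, the unit graph $\G'(R)$ is \emph{not} well-covered. Indeed, in that case the right-hand side already fails (its characteristic requirement is violated), so establishing that the left-hand side fails too settles the equivalence.

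To prove this claim I would split on whether $2\in\U(R)$. If $2\in\U(R)$, then $\G'(R)$ is not well-covered by Proposition \ref{2unit not}, and there is nothing more to do. If instead $2\notin\U(R)$, then Lemma \ref{rtorj} lets me pass to the semisimple quotient: it suffices to show that $\G'\b(R/J(R)\b)$ is not well-covered. Writing $R/J(R)\cong M_{n_1}(F_1)\times\cdots\times M_{n_r}(F_r)$ by Wedderburn-Artin, the condition $2\notin\U\b(R/J(R)\b)$ forces at least one $F_i$ to have characteristic $2$, while $\chr\b(R/J(R)\b)\ne 2$ forces at least one $F_j$ to have odd characteristic. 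Collecting the characteristic-$2$ factors into a ring $R'$ and the remaining factors into a ring $S$ gives $R/J(R)\cong R'\times S$ with $\chr(R')=2$ and $\chr(S)$ odd (hence $\ne 2$), and Lemma \ref{r times s} then yields that $\G'(R'\times S)=\G'\b(R/J(R)\b)$ is not well-covered, as required.

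I expect the main obstacle to be the bookkeeping that translates the hypothesis $\chr\b(R/J(R)\b)\ne 2$ into exactly the form needed to apply the two available lemmas. The point is that $\chr\b(R/J(R)\b)$ equals the least common multiple of the $\chr(F_i)$, so $2\in\U(R)$ corresponds to all $\chr(F_i)$ being odd (handled by Proposition \ref{2unit not}), whereas $2\notin\U(R)$ together with $\chr\b(R/J(R)\b)\ne 2$ corresponds precisely to a genuinely mixed splitting into characteristic-$2$ and odd-characteristic parts --- and only this mixed form matches the hypotheses of Lemma \ref{r times s}. Once these elementary characteristic computations are verified, the remaining work is a direct assembly of Proposition \ref{g=g'}, Proposition \ref{2unit not}, Lemma \ref{rtorj}, and Lemma \ref{r times s}, with no further hard analysis.
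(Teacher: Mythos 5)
Your proposal is correct and uses exactly the same ingredients in the same roles as the paper's proof: Proposition \ref{g=g'} to identify the two graphs when $\chr\b(R/J(R)\b)=2$, Proposition \ref{2unit not} to rule out $2\in\U(R)$, Lemma \ref{rtorj} to pass to the semisimple quotient, and Lemma \ref{r times s} applied after grouping the Wedderburn factors by characteristic. The only difference is organizational --- you argue by cases on $\chr\b(R/J(R)\b)$ and prove the contrapositive of the forward implication, while the paper runs the forward implication directly --- which is not a genuinely different route.
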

\begin{proof}
If the unit graph of $R$ is well-covered, Proposition \ref{2unit not} implies that $2$ is a zero-divisor. Considering
Lemma \ref{rtorj}, we conclude that $R/J(R)$ is also well-covered. Given that $R/J(R)$ is semisimple, we can find
finite fields $F_i$ and positive integers $n_i$ such that $R/J(R)\simeq M_{n_1}(F_1)\times\cdots\times M_{n_r}(F_r)$.
Since 2 is a zero-divisor, for at least one $j$ we must have $\chr F_j=2$. Now, if for some $i$ the characteristic of
$F_i$ is not equal to $2$, a contradiction arises by taking Lemma \ref{r times s} into account. Therefore, for each
$i$, we have $\chr(F_i)=2$. Consequently, $\chr\b(R/J(R)\b)=2$. Moreover, according to Proposition \ref{g=g'}, the unit
graph and the unitary Cayley graph coincide. Hence, the only if implication follows. Proposition \ref{g=g'} establishes
the reverse implication, completing the proof of the theorem.
\end{proof}

\begin{cor}\label{wellunit2}
	Let $R$ be a finite ring. Then, the unit graph $\G'(R)$ is well-covered if and only if either
	\begin{enumerate}
		\item $R/J(R)$ is isomorphic to one of $F$, $F\times F$, or $M_2(F)$ for some finite field $F$ with characteristic $2$;
		\item or $R/J(R)$ is isomorphic to $\mathbb{Z}_2^k$ for some $k\in \mathbb{N}$.
	\end{enumerate}
\end{cor}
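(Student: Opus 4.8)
The plan is to obtain the corollary as a direct consequence of Theorem \ref{wellunit} together with the classification of finite rings with well-covered unitary Cayley graph established in \cite{Our}. By Theorem \ref{wellunit}, $\G'(R)$ is well-covered if and only if $\G(R)$ is well-covered and $\chr\b(R/J(R)\b)=2$. Moreover, by \cite[Proposition 3.1]{Our} the well-coveredness of $\G(R)$ depends only on the semisimple quotient $S:=R/J(R)$. Hence the task reduces to listing those semisimple rings $S$ for which $\G(S)$ is well-covered and $\chr(S)=2$, and checking that this list is exactly the one in the statement.

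First I would write $S\simeq M_{n_1}(F_1)\times\cdots\times M_{n_r}(F_r)$ via Wedderburn--Artin and recall from \cite{Our} which multisets of factors yield a well-covered $\G(S)$. I would then impose the condition $\chr(S)=2$. Since $\chr(S)$ is the least common multiple of the (prime) characteristics $\chr(F_i)$, we have $\chr(S)=2$ exactly when $\chr(F_i)=2$ for every $i$; in particular every field factor is of characteristic two. Intersecting the \cite{Our} list with this constraint, the surviving possibilities are: a single matrix factor $M_2(F)$ with $\chr F=2$; a single field factor $F$ with $\chr F=2$; two isomorphic field factors $F\times F$ with $\chr F=2$; and arbitrary-length products of copies of $\mathbb{Z}_2$, that is $\mathbb{Z}_2^k$. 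These are precisely items (1) and (2) of the statement. I would then note that no other characteristic-two semisimple ring survives: factors $M_n(F)$ with $n\geq 3$, two non-isomorphic field factors $F\times F'$, products containing an $M_2(F)$ together with any further nonzero factor, and products $\mathbb{Z}_2^k\times F$ with $|F|>2$ are all absent from the \cite{Our} list and hence excluded.

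Finally I would reconcile the overlaps so that the two displayed cases together form a genuine if-and-only-if characterization: $\mathbb{Z}_2$ is simultaneously the field case with $F=\mathbb{Z}_2$ and the $k=1$ instance of $\mathbb{Z}_2^k$, while $\mathbb{Z}_2\times\mathbb{Z}_2$ is both the $F\times F$ case with $F=\mathbb{Z}_2$ and the $k=2$ instance of $\mathbb{Z}_2^k$; such redundancy is harmless for the equivalence. I expect no genuinely new argument to be required here: the only real content is Theorem \ref{wellunit} and the cited unitary Cayley classification, and the main (and essentially bookkeeping) obstacle is to transcribe that classification correctly and verify that its intersection with $\chr(S)=2$ collapses exactly to (1) and (2).
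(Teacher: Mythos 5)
Your proposal is correct and follows exactly the paper's route: the paper likewise derives the corollary immediately from Theorem \ref{wellunit} combined with the classification of well-covered unitary Cayley graphs in \cite[Theorem 3.5]{Our}. Your additional bookkeeping (intersecting that classification with the condition $\chr\b(R/J(R)\b)=2$) is just the detail the paper leaves implicit.
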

\begin{proof}
The result follows from Theorem \ref{wellunit} and \cite[Theorem 3.5]{Our}.
\end{proof}

In addition to examples provided in \cite[Example 3.6]{Our}, we present several non-commutative rings (satisfying each
condition stated in the above corollary) for which the unit graph is well-covered. The idea is derived from
\cite[Example 2.10]{ours}, which utilizes the concept of group algebras. Recall that by letting $F$ be a finite field
and $G$ a finite group, a \emph{group algebra} $F[G]$, is defined as the ring of all (formal) linear combinations of
elements of $G$ with coefficients in $F$ with addition and multiplication defined as
\begin{align*}
\sum_{g\in G} a_gg + \sum_{g\in G} b_gg & = \sum_{g\in G} (a_g+b_g)g \\
\l(\sum_{g\in G} a_gg \r)\cdot \l(\sum_{g\in G} b_gg\r) & = \sum_{g\in G} \l(\sum_{x\in G} a_xb_{x^{-1}g}\r)g,
\end{align*}
where $a_g,b_g\in F$ for each $g\in G$.
\begin{ex}
Let $F$ be a finite field with characteristic $2$ and $G$ a finite non-abelian $2$-group. Consider the group algebra
$R=F[G]$, which is a non-commutative ring. According to  \cite[Proposition 19.11]{lam}, we see that $R$ is a local ring
and $R/J(R)\simeq F$. Moreover, by considering $R'=R\times R$, we observe that  $R'/J(R')\simeq  R/J(R)\times R/J(R)
\simeq F\times F$. Furthermore, if we assume that $S=\b(\mathbb{Z}_2[G]\b)^k$ for some $k\in \mathbb{N}$, then
$S/J(S)\simeq \mathbb{Z}_2^k$. Also, by letting $T=M_2(R)$, we infer that $T/J(T)\simeq M_2\b(R/J(R)\b)\simeq M_2(F)$.
Therefore, for all the rings $R$, $R'$, $S$ and $T$, the unit graph is well-covered. The key step in this example is to
find a finite non-commutative local ring with characteristic two, so any other ring with this property would provide a
new family of examples.
\end{ex}

Considering a graph $G$, it is evident that the family of all independent sets $\Delta(G)$ of $G$ forms a simplicial
complex, that is called the \emph{independence complex} of $G$ (for the definition of simplicial complex and its
properties, we refer the reader to \cite{hibi}). A graph $G$ is called \emph{CM} (resp. \emph{Gorenstein},
\emph{shellable}) when the independence complex $\Delta(G)$ is CM (resp. Gorenstein, shellable).

\begin{cor}\label{UnitCM}
Suppose that $R$ is a finite ring. Then, the unit graph of $R$ is CM \ifof it is shellable \ifof either $R$ is a field
with characteristic $2$ or $R\simeq \z_2^k$ for some $k\in \n$. Also, this graph is Gorenstein \ifof $R\simeq \z_2^k$ for some $k\in \n$.
\end{cor}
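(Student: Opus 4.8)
The plan is to push the unit graph through the chain of implications ``pure shellable $\Rightarrow$ Cohen-Macaulay $\Rightarrow$ well-covered'' and to use well-coveredness to make the independence complex explicit. First I would note that in either forward direction I may assume $\G'(R)$ is well-covered; then Theorem \ref{wellunit} gives $\chr\b(R/J(R)\b)=2$, so $2\notin\U(R)$ and, by Proposition \ref{g=g'}, $\G'(R)=\G(R)$, while Corollary \ref{wellunit2} forces $R/J(R)$ to be one of $F$, $F\times F$, $M_2(F)$ (with $\chr F=2$), or $\z_2^k$. Next I would exploit Remark \ref{xtoy}: since $2\notin\U(R)$, adjacency in $\G'(R)$ is read off in $R/J(R)$, so $\G'(R)$ is the inflation of $\G'\b(R/J(R)\b)$ obtained by replacing each vertex with an independent set of size $d=|J(R)|$ and joining the classes of adjacent base vertices completely. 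Consequently $\Delta\b(\G'(R)\b)$ is the vertex-blow-up of $\Delta\b(\G'(R/J(R))\b)$: a set of vertices is a face exactly when its image in $R/J(R)$ is a face of the base complex.

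The next step is to check that the two asserted families genuinely give shellable (hence Cohen-Macaulay) graphs. If $R$ is a field of characteristic $2$, then $x+y\ne0$ for all distinct $x,y$, so $\G'(R)=K_{|F|}$ and $\Delta\b(\G'(R)\b)$ consists of $|F|$ isolated points; this is shellable and Cohen-Macaulay, and it is a sphere $S^0$ precisely when $|F|=2$. If $R\simeq\z_2^k$, then $\U(R)=\{(1,\dots,1)\}$, each vertex has a unique neighbour, $\G'(R)$ is a perfect matching, and $\Delta\b(\G'(R)\b)$ is the join of $2^{k-1}$ copies of $S^0$, namely the sphere $S^{2^{k-1}-1}$, which is shellable, Cohen-Macaulay and Gorenstein. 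Since a sphere is Gorenstein while a set of at least three points is not, this already pins down the Gorenstein answer.

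It then remains to eliminate every other well-covered case, which I would do through connectedness in codimension one of $\Delta$ (equivalently, connectedness of the facet-adjacency graph, a necessary condition for Cohen-Macaulayness of a pure complex of positive dimension). The case $J(R)\ne0$ falls uniformly: with $d\ge2$, each facet of $\Delta\b(\G'(R)\b)$ is a blown-up maximal independent set of common size $d\alpha$, where $\alpha$ is the independence number of $\G'\b(R/J(R)\b)$, and two distinct facets $S_A,S_B$ meet in $S_{A\cap B}$, of size at most $d(\alpha-1)<d\alpha-1$; hence no two facets share a codimension-one face, the facet-adjacency graph is edgeless with at least two vertices, and $\Delta$ is not Cohen-Macaulay. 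This forces $R$ to be semisimple. For $R=F\times F$ the maximal independent sets of $\G(F\times F)=K_{|F|}\times K_{|F|}$ are exactly the $2|F|$ rows and columns of the $|F|\times|F|$ grid, any two of which meet in at most one vertex; so for $|F|\ge4$ the facet-adjacency graph is again edgeless and $\Delta$ is not Cohen-Macaulay, while $|F|=2$ returns the cycle $S^1$ already subsumed under $\z_2^2$.

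I expect the genuinely delicate point to be the matrix case $M_2(F)$: establishing that its maximal independent sets (the translates of the maximal ``singular'' subspaces) pairwise meet in dimension too small to give a codimension-one face, so that the same facet-adjacency argument applies. The cleanest way to finish is probably to avoid this computation and instead invoke the classification of Cohen-Macaulay and Gorenstein unitary Cayley graphs from \cite{Our}, applied to $\G(R)=\G'(R)$, which rules out $M_2(F)$ and every $F\times F$ with $|F|\ge4$ directly. Assembling all of this yields the stated equivalences: $\G'(R)$ is Cohen-Macaulay iff it is shellable iff $J(R)=0$ and $R/J(R)\in\{F,\z_2^k\}$, that is, iff $R$ is a field of characteristic $2$ or $R\simeq\z_2^k$; and it is Gorenstein iff $\Delta\b(\G'(R)\b)$ is a sphere, i.e.\ iff $R\simeq\z_2^k$.
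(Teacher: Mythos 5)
Your proposal is correct, but it takes a substantially more self-contained route than the paper, whose entire proof is the observation that CM implies well-covered followed by Theorem \ref{wellunit} (to get $\chr(R/J(R))=2$, hence $\G'(R)=\G(R)$ by Proposition \ref{g=g'}) and a direct citation of the classification of CM, shellable and Gorenstein unitary Cayley graphs in \cite[Theorem 3.9 and Corollary 3.10]{Our}. You perform the same initial reduction, but then reprove most of the cited classification by hand: the explicit identification of $\G'(F)$ as a complete graph and of $\G'(\z_2^k)$ as a perfect matching (so the independence complexes are a discrete set and a cross-polytope boundary sphere, giving shellability, CM-ness, and the Gorenstein dichotomy), and the elimination of $J(R)\ne 0$ and of $F\times F$ with $|F|\ge 4$ via failure of connectedness in codimension one of the facet--ridge graph. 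These arguments are sound: the blow-up description of $\Delta(\G'(R))$ follows from Remark \ref{xtoy}, the intersection bound $d(\alpha-1)<d\alpha-1$ for $d\ge 2$ is right, and your identification of the maximal independent sets of $K_{|F|}\times K_{|F|}$ as the rows and columns of the grid is correct. For $M_2(F)$ you fall back on \cite{Our}, which is exactly what the paper does for every case; so your write-up buys a mostly independent verification at the cost of redundancy. One small caveat: your chain ``shellable $\Rightarrow$ CM $\Rightarrow$ well-covered'' is only valid for \emph{pure} shellability, so if shellability is meant in the nonpure Bj\"orner--Wachs sense you cannot start the shellable direction by assuming well-coveredness; the paper sidesteps this by citing \cite{Our} for that equivalence as well.
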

\begin{proof}
It is well-known that a CM graph is well-covered (for instance, see \cite[Lemma 9.1.10]{hibi}).
So, this is an immediate consequence  of Theorem \ref{wellunit} and \cite[Theorem 3.9 \& Corollary 3.10]{Our}.
\end{proof}

In the end, we present a problem related to our work for further research. Notions of the unit graph and the unitary
Cayley graph of a ring have been extended and generalized in various ways. One of which is as follows and is introduced for
commutative rings in \cite{naghi}. Let $R$ be a ring. We define the \emph{generalized unit graph} of $R$ to be the graph with
vertex set $R$ in which two distinct vertices $x$ and $y$ are adjacent whenever there exists a unit element $u$ of $R$
such that $x + uy$ is a unit of $R$. It is not hard to see that for a finite field $F$, the generalized unit graph of
$F$ is a complete graph and therefore well-covered.

\begin{prob}
Characterize all finite rings for which the generalized unit graph is well-covered or CM.
\end{prob}


\end{document}